\documentclass[11pt]{article}

\usepackage[margin = 1in]{geometry}
\usepackage{setspace}

\usepackage[T1]{fontenc}
\usepackage{charter} 
\usepackage{euler} 
\usepackage{float}
\usepackage{soul} 
\usepackage{bbm} 
\usepackage{verbatim}
\usepackage{amsmath}
\usepackage{amsthm}
\usepackage{amssymb}
\usepackage{textcomp}
\usepackage{graphicx}
\usepackage[english]{babel}
\usepackage{tikz}
\usepackage{dblfloatfix}
\usepackage{csquotes}
\usepackage{titlesec}

\usepackage{cancel}

\usepackage{enumerate}


\allowdisplaybreaks 

\usepackage{algorithm}
\usepackage{algpseudocode}

\newtheorem{theorem}{Theorem}[]

\newtheorem{lemma}[theorem]{Lemma}

\newtheorem{corollary}[theorem]{Corollary}


\usepackage{sectsty}
\sectionfont{\centering}

\newcommand{\1}{\mathbf{1}} 
\newcommand{\prob}{\mathbb{P}} 
\newcommand{\ex}{\mathbb{E}} 
\newcommand{\R}{\mathbbm{R}} 
\newcommand{\bin}{\ensuremath{\operatorname{binomial}}} 
\newcommand{\unif}{\ensuremath{\operatorname{uniform}}} 

\title{\bf{On the size of temporal cliques in subcritical random temporal graphs}
  \thanks{Luc Devroye acknowledges the support of NSERC grant A3450.
    G\'abor Lugosi acknowledges the support of Ayudas Fundación BBVA a
Proyectos de Investigación Científica 2021 and
the Spanish Ministry of Economy and Competitiveness grant PID2022-138268NB-I00, financed by MCIN/AEI/10.13039/501100011033,
FSE+MTM2015-67304-P, and FEDER, EU.).}
}

\author{
\bf{Caelan Atamanchuk} \\
Department of Mathematics and Statistics \\
McGill University \\
caelan.atamanchuk@gmail.com
\and
\bf{Luc Devroye} \\
School of Computer Science, McGill University, \\
  Montreal, Canada
lucdevroye@gmail.com
\and
\bf{G\'{a}bor Lugosi} \\
Department of Economics and Business, \\
Pompeu  Fabra University, Barcelona, Spain \\
ICREA, Pg. Lluís Companys 23, 08010 Barcelona, Spain \\
Barcelona Graduate School of Economics \\
gabor.lugosi@gmail.com
}

\begin{document}

\setstretch{1.2}

\maketitle

\begin{abstract}
  A \emph{random temporal graph} is an Erd\H{o}s-R\'enyi random graph $G(n,p)$, together with a random
  ordering of its edges. A path in the graph is called \emph{increasing} if the edges on the path appear
  in increasing order. A set $S$ of vertices forms a \emph{temporal clique} if for all $u,v \in S$, there is an increasing path from $u$ to $v$.
  Becker, Casteigts, Crescenzi, Kodric, Renken, Raskin and Zamaraev \cite{Becker2023} proved that if $p=c\log n/n$ for $c>1$, then,  with high probability, there
  is a temporal clique of size $n-o(n)$. On the other hand, for $c<1$,   with high probability,
  the largest temporal clique is of size $o(n)$.
  In this note, we improve the latter bound by showing that, for $c<1$, the largest temporal clique
  is of \emph{constant} size with high probability. 
\end{abstract}

\section{Introduction}\label{intro}

A \textit{temporal graph} $G = (V,E,\pi)$ is a graph $G=(V,E)$ together with an ordering $\pi:E \to \{1,\ldots,|E|\}$ on the edge set, interpreted as the times where the edges appear in the graph, often called the \textit{time stamps} of $G$. We say that an edge $e \in E$ \textit{precedes} an edge $e' \in E$ if $\pi(e) < \pi(e')$. A path from $u$ to $v$ is called $\textit{increasing}$ if each edge used in the path precedes the edge that is used after it, and we say that $v$ is \textit{reachable} from $u$ (or that $u$ can reach $v$) if an increasing path exists from $u$ to $v$. A set of vertices $S \subseteq V$ is called a \emph{temporal clique} if for all distinct vertices $u,v \in S$, there is an increasing path from $u$ to $v$ (and vice versa).


In this note we discuss temporal graphs where $\pi$ is a uniform permutation on the edges and $G$ is an Erd\H{o}s-R\'{e}nyi random graph. The resulting temporal graph is called a \textit{random simple temporal graph}; \textit{RSTG} for short.
Motivated by modelling time-dependent propagation processes,
this model was introduced by Casteigts, Raskin, Renken, and Zamaraev \cite{Casteigts2023}.

One may generate RSTGs by a simple method: start with the complete graph $K_n$, then assign each edge an independent $\unif(0,1)$ random variable $(U_e : e \in E)$ and delete every edge with $U_e > p$. In this construction, we say that $e$ precedes $e'$ if $U_e < U_{e'}$. We also call the labels $U_e$ the time stamps. Importantly, creating i.i.d.\ uniform time stamps like this allows us to extend the notion of a temporal graph to infinite graphs which is needed for our analysis.

Casteigts, Raskin, Renken, and Zamaraev \cite{Casteigts2023} studied connectivity properties of RSTGs. They
identified the thresholds for different strengths of connectivity to
be in the region where $p = \frac{c\log(n)}{n}$ for some constant $c >
0$
  (Throughout the paper, $\log$ denotes natural logarithm).
Furthering this work, Broutin, Kamčev and Lugosi \cite{Broutin2023} identified the asymptotic lengths of the longest and shortest increasing paths in RSTGs with high probability for values of $p$ in this range. (We say that an event $E=E(n)$ happens with high probability if $\prob(E) \to 1$ as $n \to \infty$). 
Becker, Casteigts, Crescenzi, Kodric, Renken, Raskin and Zamaraev  \cite{Becker2023} identified $p = \frac{\log(n)}{n}$ as the threshold for the appearance of large temporal cliques. In particular, they showed that for every $\epsilon >0$,
when $p \ge \frac{(1+\epsilon)\log(n)}{n}$, then
there is a temporal clique of size $n-o(n)$ with high probability, while  if
$p \le \frac{(1-\epsilon)\log(n)}{n}$, then every temporal clique is of size $o(n)$ with high probability.

RSTGs are a natural way to model time-dependent processes on networks like social interactions and infection spread. A closely related model is the \textit{random gossip protocol} model, in which a sequence of edges $e_1,\ldots,e_k$ are chosen uniformly from the edges of $K_n$ and constructed a graph $G_{n,k}$. Increasing paths are defined as for temporal graphs. Papers studying this model include Moon \cite{Moon1972}, Boyd and Steele \cite{Boyd1979}
and Haigh \cite{Haigh1981}.

For deterministic temporal graph models with random time stamps, see Chv\'{a}tal and Koml\'os \cite{Chvatal1971}, and Graham and Kleitman \cite{Graham1973}, 
Lavrov and Loh \cite{Lavrov2016} and Angel, Ferber, Sudakov and Tassion \cite{Angel2020}.

Our contribution is summarized in the following theorem.
It shows that in the subcritical regime $p= c\log n/n$ with $c<1$,
the size of the largest temporal clique is not only $o(n)$ but in fact, of size $O(1)$,
improving the upper bound of \cite{Becker2023}. This reveals a quite spectacular
phase transition around $p= \log n/n$, since for $c>1$, there is a temporal clique
of size $n-o(n)$. The behavior of the size of the largest temporal clique near the critical
regime remains an intriguing research problem.

\begin{theorem}\label{main}
Let $p = \frac{c\log(n)}{n}$, and let $G$ be an RSTG with edge probability $p$. If $c \in (0,1)$, then the largest temporal clique in $G$ is of size at most $\lceil \frac{1}{1-c}+1 \rceil$ with high probability.
\end{theorem}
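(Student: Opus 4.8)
\section*{Proof proposal}

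The plan is to prove the theorem by the first moment method. I would set $K = \lceil 1/(1-c)\rceil + 2$, so that $(K-1)(1-c) > 1$ holds \emph{strictly} (this strictness, which handles the boundary case where $1/(1-c)$ is an integer, is exactly what the $+2$ buys us), and show that the expected number of temporal cliques of size $K$ tends to $0$. Since any subset of a temporal clique is again a temporal clique, the absence of a $K$-clique forces the largest temporal clique to have size at most $K-1 = \lceil 1/(1-c)\rceil + 1 = \lceil 1/(1-c)+1\rceil$ with high probability. Writing $P_K$ for the probability that a fixed $K$-set $S = \{v_1,\dots,v_K\}$ is a temporal clique, the goal reduces to the bound $\binom{n}{K}P_K \to 0$, i.e.\ $P_K = o(n^{-K})$.

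To estimate $P_K$ I would work with reachability sets. For a vertex $v$, let $R(v)$ be the set of vertices reachable from $v$ by an increasing path; then $S$ is a temporal clique exactly when $S\setminus\{v_i\}\subseteq R(v_i)$ for every $i$. The structure of $R(v)$ is captured by the natural exploration in which one scans the time stamps in increasing order and activates a new vertex whenever a present edge joins it to an already-activated one. Since present edges carry time stamps in $[0,p]$ and each active vertex has of order $n$ potential neighbours, $|R(v)|$ is dominated by a pure-birth (Yule-type) process run for rescaled time $np = c\log n$. This yields $\E|R(v)| \le n^{c+o(1)}$ and, crucially, a geometric-type upper tail, so that every fixed moment obeys $\E|R(v)|^{K-1} \lesssim (n^{c})^{K-1}$. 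By symmetry among the $n-1$ non-root vertices this gives the single-source estimate $\Prob\big(\{v_2,\dots,v_K\}\subseteq R(v_1)\big)\lesssim n^{(c-1)(K-1)}$, the factor $n^{c-1}$ being paid once per target. The same estimate also follows from a union bound over increasing paths: summing $n^{\ell-1}p^{\ell}/\ell!$ over lengths $\ell$ recovers $n^{c-1}$ per target, the dominant contribution coming from paths of length $\Theta(\log n)$ through external vertices.

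The heart of the argument is to combine the $K$ single-source estimates, one per root, into $P_K \lesssim n^{(c-1)(K-1)K + o(1)}$; multiplying by $\binom{n}{K} = n^{K+o(1)}$ then gives exponent $K\big(1-(K-1)(1-c)\big) < 0$, as desired. Heuristically this is the assertion that the sets $R(v_1),\dots,R(v_K)$, each of density $n^{c-1}$, behave like independent random sets, so that the probability that all of them contain all $K$ roots factorizes over the roots. I would make this rigorous through a union bound over the combined certificate, namely the union of $K$ foremost increasing out-trees (one rooted at each $v_i$ and reaching the other $K-1$ vertices), bounding the probability of each realization by a presence factor $p^{m}$ over the $m$ edges actually used, times the probability that the partial order induced on these edges admits all the required increasing chains.

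The main obstacle is controlling the overlaps in this combined certificate. Shared edges are cheaper, since the presence factor is paid only once, so one must rule out that such savings beat the independent-sets exponent; the mechanism is that an edge reused by out-trees rooted at different vertices is subject to conflicting increasing constraints, which suppresses the consistency factor enough to compensate. A related delicacy is that the per-configuration factors are individually super-polynomial --- the number of length-$\ell$ paths is $n^{\Theta(\log n)}$ while the consistency factor $1/\ell!$ is $n^{-\Theta(\log\log n)}$ --- so the bound must be extracted by carrying out the relevant sums (it is the aggregate $\sum_\ell (c\log n)^\ell/\ell! = n^{c}$ that is polynomial) rather than by naive exponent bookkeeping. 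Finally, I would check that degenerate certificates built from short paths or direct edges are subdominant: a clique realized by direct edges alone is a copy of $K_K$, whose expected count is $n^{K-\binom{K}{2}+o(1)}\to 0$ for $K\ge 4$ (and our $K$ always satisfies $K\ge 4$), with intermediate mixtures handled by the same summation.
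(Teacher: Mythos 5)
Your overall strategy --- first moment method, reachability sets of size $n^{c+o(1)}$ with controlled moments, and the exponent count $P_K \lesssim n^{(c-1)K(K-1)}$ so that $\binom{n}{K}P_K \to 0$ --- matches the paper's, and your arithmetic for the clique size (including securing strict inequality at the boundary case) is correct. The single-source ingredients you assume are also genuinely provable: the paper establishes stochastic domination of the reachable set by the total progeny of a temporal branching process (Lemma \ref{stoch}) and the moment bound $\E[|T^*|^q]\le C(q)(np)^{2q}e^{npq}$ (Theorem \ref{sizebound}), though even these take real work (a random-walk separation argument to handle overlapping increasing paths within one tree, and an explicit inductive coupling for the domination), rather than following immediately from a Yule-process comparison.

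The genuine gap is the step you yourself flag as ``the heart of the argument'': upgrading the $K$ single-source estimates to $P_K\lesssim n^{(c-1)(K-1)K+o(1)}$, i.e.\ the claim that the $K$ events $\{S\setminus\{v_i\}\subseteq R(v_i)\}$ effectively factorize. These events are driven by the \emph{same} edges and the \emph{same} time stamps and are positively correlated in a way that nothing in your sketch controls; the proposed union bound over unions of $K$ foremost out-trees, together with the assertion that edge reuse is ``suppressed by conflicting increasing constraints,'' is a research program rather than an argument, and it sits exactly where a direct attack becomes hard (the number of candidate certificates is $n^{\Theta(\log n)}$, and the savings from shared edges are not obviously dominated by the loss in the consistency factor). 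The paper circumvents this entirely with a decoupling device absent from your proposal: it splits the stamps into $[0,p/2]$ and $[p/2,p]$, relaxes the clique event to the requirement that the forward set $A_i$ (reached from $i$ with stamps below $p/2$) meets the backward set $B_j$ (reaching $j$ with stamps above $p/2$) for every ordered pair, plants independent uniform witnesses $v_{ij}$ at a cost of $n^{m(m-1)}$, conditions on the sizes $|A_i|,|B_j|$, and decouples the $A$-family from the $B$-family using negative association of the edge indicators (Lemma \ref{dependence}); H\"older's inequality then reduces everything to moments of a single $|A_1|$. Without this, or some substitute mechanism that actually proves the factorization, your proof does not close.
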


Note that for $c \leq \frac{1}{2}$, Theorem \ref{main} asserts that $G$ has
no temporal clique of size $4$. This bound can't be improved, since 
for $p = \omega(\frac{1}{n})$ the static Erd\H{o}s-R\'{e}nyi graph contains a triangle with high probability. Moreover, every triangle is trivially a temporal clique of size $3$.
We conjecture that the upper bound of Theorem \ref{main} is sharp for all $c\in (0,1)$.

The proof of Theorem \ref{main} is based on relating the number of vertices that are reachable by monotone paths from a typical vertex to the total progeny of a certain ``temporal'' branching process.
We utilize the temporal branching process bounds to assert that the number of vertices that a collection of $m \geq \lceil 1 + \frac{1}{1-c}\rceil$ vertices can reach is small enough so that the chance of them forming a component unlikely enough that expected number of components of size $m$ tends to zero.

\section{Temporal branching processes}\label{temptree}

We begin this section by introducing temporal branching processes that are the key tool
in the proof of Theorem \ref{main}.
We only focus on branching processes with binomial offspring distribution as this is the degree distribution of a typical vertex in an Erd\H{o}s-R\'enyi random graph.
One way to generate these processes is as follows.
Start with an infinite rooted $n$-ary tree, add an independent $\unif(0,1)$ time stamp $U_e$
to every edge. Delete any edge with $U_e > p$. This decomposes the tree into a forest
and we only focus on the component that contains the root vertex.
We say that a vertex $v$ is reachable from the root if the unique path from the root to $v$ in the infinite $n$-ary tree has edge labels $U_e \leq p$ for each edge on the path and these labels are increasing on the path.
The subtree consisting of only vertices reachable from the root is a temporal branching process with a $\bin(n,p)$ offspring distribution. Throughout the rest of the paper, $T$ is always an infinite $n$-ary tree with such a labelling on the edges. The following sequence of results provides us with the necessary upper bounds for the size of the reachable set of $T$.

\begin{lemma}\label{treesep}
Let $P_1,\ldots,P_q$ be a finite collection of distinct infinite paths in $T$, and let $(X_k)_{k \geq 0}$ be a random walk down the tree, that is, $X_0$ is the root and $X_k$ is uniformly distributed over the children of $X_{k-1}$ for all $k \geq 1$. Then, $\prob(\tau \geq \ell) \leq \frac{q}{n^\ell}$, where
$$\tau = \max\{ k > 0 : X_0,\ldots,X_k \text{ coincides with one of the $q$ paths}\}.$$
\end{lemma}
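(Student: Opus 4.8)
The plan is to turn the event $\{\tau \ge \ell\}$ into a simple union of single-path events and then apply a union bound. The key structural observation is that the paths $P_1,\ldots,P_q$ are rays descending from the root, so each $P_i$ has a well-defined initial segment of every depth, and these segments are nested: if the walk $X_0,\ldots,X_k$ coincides with $P_i$ down to depth $k$, then $X_0,\ldots,X_{\ell}$ coincides with $P_i$ down to depth $\ell$ for every $\ell \le k$. Consequently, the maximum-based definition of $\tau$ collapses to a fixed-depth statement: $\tau \ge \ell$ holds if and only if the initial segment $X_0,\ldots,X_{\ell}$ equals the depth-$\ell$ initial segment of at least one of the $q$ paths. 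Writing $A_i$ for the event that $X_0,\ldots,X_{\ell}$ traces the first $\ell$ edges of $P_i$, this gives the identity $\{\tau \ge \ell\} = \bigcup_{i=1}^{q} A_i$.

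Next I would compute $\prob(A_i)$ for a fixed path $P_i$. The random walk is generated by independent uniform choices: at each step $k$, the vertex $X_k$ is chosen uniformly among the $n$ children of $X_{k-1}$. Forcing the walk to follow $P_i$ means that at each of the $\ell$ steps $k = 1,\ldots,\ell$ we must select the unique child of $X_{k-1}$ lying on $P_i$. Each such selection has probability $1/n$, and because the choices at distinct steps are independent, we obtain $\prob(A_i) = n^{-\ell}$, regardless of which path $P_i$ is.

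Finally, applying the union bound to $\{\tau \ge \ell\} = \bigcup_{i=1}^{q} A_i$ yields
\[
  \prob(\tau \ge \ell) \le \sum_{i=1}^{q} \prob(A_i) = \frac{q}{n^{\ell}},
\]
which is exactly the claimed bound. There is no genuine obstacle here: the argument is a union bound over the $q$ paths, and the only point requiring a moment of care is the nesting observation in the first step, which is what licenses replacing the event $\{\tau \ge \ell\}$ (defined through a maximum) by the fixed-depth coincidence event. The distinctness of the paths and the infiniteness of $T$ play no essential role in the upper bound beyond guaranteeing that each $P_i$ determines a unique depth-$\ell$ prefix; distinctness would only matter if one sought a matching lower bound via inclusion--exclusion.
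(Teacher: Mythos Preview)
Your proof is correct and follows exactly the same approach as the paper: reduce $\{\tau \ge \ell\}$ to the event that $X_0,\ldots,X_\ell$ coincides with one of the $q$ paths, then apply a union bound using that each such coincidence has probability $n^{-\ell}$. You have simply spelled out the nesting observation and the probability computation in more detail than the paper does.
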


\begin{proof}
If $\tau \geq \ell$, then $X_0,\ldots,X_\ell$ coincides with one of the $P_1,\ldots,P_q$. Since $X_k$ is uniform over the children of $X_{k-1}$ and the tree is $n$-ary, the result follows from the union bound yields.
\end{proof}

\begin{lemma}\label{treesurvival}
Let $T^*$ be the set of all vertices that are reachable from the root of $T$. If $v_1,\ldots,v_q$ are uniform vertices chosen from the $\ell$-th generation of $T$, then
$$\prob(v_1,\ldots,v_q \in T^*) \leq \frac{(q-1)!e^{npq}}{n^{\ell q}}.$$
Furthermore, when $\ell \geq (np)^4$ and $np \to \infty$ as $n \to \infty$,
$$\prob(v_1,\ldots,v_q \in T^*) = O\left(\frac{(q-1)!C^{q-1}}{n^{\ell q}}\right),$$
for some $C > 0$.
\end{lemma}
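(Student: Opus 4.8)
The plan is to induct on $q$, adding one vertex at a time and using Lemma~\ref{treesep} to control how long the path of the newly added vertex can run alongside the paths of the vertices already placed. I will realise each $v_i$ as the endpoint $X^{(i)}_\ell$ of an independent random walk down $T$, so that $v_i$ is uniform on the $\ell$-th generation and $v_i\in T^*$ exactly when the labels on the $\ell$ edges of $X^{(i)}_0,\dots,X^{(i)}_\ell$ are increasing and all at most $p$. The base case $q=1$ is then immediate: a fixed root path of length $\ell$ has increasing labels with probability $1/\ell!$ and all its labels are $\le p$ with probability $p^\ell$, so $\prob(v_1\in T^*)=p^\ell/\ell!$.

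For the inductive step I condition on $\mathcal H$, the data consisting of the walks $X^{(1)},\dots,X^{(q-1)}$ together with the labels on the (at most $q-1$) root-to-generation-$\ell$ paths $P_1,\dots,P_{q-1}$ they trace. The event $\{v_1,\dots,v_{q-1}\in T^*\}$ is $\mathcal H$-measurable, while the walk $X^{(q)}$ and all edge labels off $P_1\cup\dots\cup P_{q-1}$ are independent of $\mathcal H$. Let $\tau$ be the depth to which $X^{(q)}$ coincides with one of these paths before leaving all of them; beyond depth $\tau$ every edge that $X^{(q)}$ traverses is fresh. On $\{v_1,\dots,v_{q-1}\in T^*\}$ the shared prefix of $v_q$'s path already carries increasing labels bounded by $p$, so $v_q\in T^*$ iff the $\ell-\tau$ fresh edges continue the increase while staying $\le p$; as the admissible interval has length at most $p$ and the fresh labels are independent, this conditional probability is at most $p^{\ell-\tau}/(\ell-\tau)!$. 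Averaging over the independent walk $X^{(q)}$ and invoking Lemma~\ref{treesep} in the form $\prob(\tau\ge a\mid\mathcal H)\le (q-1)/n^{a}$ gives
$$\prob\big(v_q\in T^*\mid\mathcal H\big)\le\sum_{a=0}^{\ell}\frac{q-1}{n^{a}}\,\frac{p^{\ell-a}}{(\ell-a)!}=\frac{q-1}{n^{\ell}}\sum_{b=0}^{\ell}\frac{(np)^{b}}{b!}\le\frac{(q-1)e^{np}}{n^{\ell}}$$
on that event.

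Multiplying by the indicator of $\{v_1,\dots,v_{q-1}\in T^*\}$ and taking expectations yields the recursion $\prob(v_1,\dots,v_q\in T^*)\le\frac{(q-1)e^{np}}{n^{\ell}}\,\prob(v_1,\dots,v_{q-1}\in T^*)$, which I unroll down to the base case to obtain the sharper bound $\prob(v_1,\dots,v_q\in T^*)\le\frac{(q-1)!\,e^{np(q-1)}}{n^{\ell(q-1)}}\cdot\frac{p^{\ell}}{\ell!}$. The first assertion then follows from $(np)^\ell/\ell!\le e^{np}$. For the ``furthermore'' I keep the factor $(np)^\ell/\ell!$ instead of discarding it: multiplying through by $n^{\ell q}$ gives $n^{\ell q}\prob(v_1,\dots,v_q\in T^*)\le (q-1)!\,e^{np(q-1)}\,(np)^\ell/\ell!$, and when $\ell\ge (np)^2$ Stirling's bound $(np)^\ell/\ell!\le (e\,np/\ell)^\ell\le (e/np)^\ell$ makes the logarithm of the right-hand side at most $\ell(1-\log np)+O(np)$, which tends to $-\infty$ as $np\to\infty$ for each fixed $q$; hence $\prob(v_1,\dots,v_q\in T^*)=o(n^{-\ell q})$.

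The step I expect to require the most care is the conditioning: I must check that conditioning on $\mathcal H$ exposes only the labels lying on $P_1,\dots,P_{q-1}$, so that the edges $v_q$ meets after branching off still carry independent uniform labels, and that $\tau$ is a function of $X^{(q)}$ (which is independent of $\mathcal H$) rather than of those fresh labels, so that Lemma~\ref{treesep} applies conditionally. Once this independence is pinned down, the remaining work is just the exponential-series and Stirling estimates displayed above.
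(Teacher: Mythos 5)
Your proposal is correct and follows essentially the same route as the paper: realise $v_q$ as an independent random walk, use Lemma~\ref{treesep} to control the coincidence depth $\tau$ with the previously exposed paths, bound the conditional probability by $p^{\ell-\tau}/(\ell-\tau)!$, sum the exponential series to get the factor $(q-1)e^{np}/n^{\ell}$, and telescope; the ``furthermore'' part uses the same Stirling estimate on $(np)^\ell/\ell!$ for $\ell\ge(np)^2$. Your explicit treatment of the conditioning on $\mathcal H$ is a welcome added precision over the paper's more informal chaining, but it is not a different argument.
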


\begin{proof}
Suppose that $v_1,\ldots,v_r \in T^*$ and let $T'$ be a subtree of $T$ consisting of $r$ distinct infinite paths starting at the root through $v_1,\ldots,v_r$. Let $(X_k)_{k \geq 0}$ be a random walk down the tree (independent of $v_1,\ldots,v_r$) and let $\tau$ be as in Lemma \ref{treesep}. If $\tau = j$, then there are $\ell-j$ edges left that need to both exist and be increasing to have $X_\ell \in T^*$. Hence, if $V_r = \{v_1 , \ldots , v_r \in T^*\}$,
$$\prob(X_\ell \in T^* | V_r \cap \{\tau = j\}) = p^{\ell-j}\prob\big(\big\{\text{a path of length $\ell$ is increasing}\big\} \big|\big\{ \text{first $j$ edges are increasing}\big\}\big) = \frac{p^{\ell-j}j!}{\ell!}.$$
Combining this with Lemma \ref{treesep} yields
\begin{align}\label{sum}
\prob(X_\ell \in T^* | V_r)  &
\leq \sum_{j=0}^{\ell-1} \prob(\tau = j)\prob(X_\ell \in T^* | V_r \cap \{\tau = j\}) 
+ \prob(\tau \ge \ell)\prob(X_\ell \in T^* | V_r \cap \{\tau \ge \ell\}) 
 \nonumber \\
& \leq \sum_{j=0}^\ell \left(\frac{r}{n^j}\right)\frac{p^{\ell-j}j!}{\ell!} = \frac{r}{n^\ell}\sum_{j=0}^\ell\frac{(np)^{\ell-j}j!}{\ell!}.
\end{align}
To get the first bound we use the fact that $\frac{j!}{\ell!} \leq \frac{1}{(\ell-j)!}$ to get 
$$\prob(X_\ell \in T^*|V_r) \leq \frac{re^{np}}{n^\ell}.$$
Then, since $X_\ell$ is distributed uniformly across the $\ell$-th generation,  applying the above inequality repeatedly,
\begin{align*}
\prob(v_1,\ldots,v_q \in T^*) &= \prob(v_1 \in T^*) \cdot \prob(v_2 \in T^* | V_1) \cdots \prob(v_q \in T^* | V_{q-1}) \\
&\leq \frac{p^\ell}{\ell!}\prod_{r=1}^{q-1}\left(\frac{re^{np}}{n^\ell}\right) = \left(\frac{(np)^{\ell}}{\ell!}\right)\frac{(q-1)!e^{np(q-1)}}{n^{\ell q}} \leq \frac{(q-1)!e^{npq}}{n^{\ell q}}.
\end{align*} 
For the second inequality, we split the sum in (\ref{sum}) in two separate pieces
\begin{align*}
    A = \sum_{j=0}^{\ell - \sqrt{\ell}}\frac{(np)^{\ell - j}j!}{\ell!}, \quad \text{and} \quad B = \sum_{j=\ell -\sqrt{\ell}+1}^\ell \frac{(np)^{\ell-j}j!}{\ell!} = \sum_{k=0}^{\sqrt{\ell}-1} \frac{(np)^k}{\ell \cdot (\ell-1) \cdots (\ell-k+1)}.
\end{align*}
The first term may be bounded by
\begin{align*}
A &\leq \sum_{k=\sqrt{\ell}}^\ell \frac{(np)^k}{k!} \leq \left|e^{np}- \sum_{k=0}^{\sqrt{\ell}-1} \frac{(np)^k}{k!}\right| \leq \frac{e^{np}(np)^{\sqrt{\ell}}}{(\sqrt{\ell})!}~,
\end{align*}
where the second inequality follows from the Lagrange form of the remainder in Taylor's theorem.

For the second term, since $\ell \cdot (\ell-1) \cdots (\ell - k + 1) \geq \ell^{k}(1-\frac{1}{\sqrt{\ell}})^{\sqrt{\ell}}$ for all $0 \leq k \leq \sqrt{\ell}$, we have that
\begin{align*}
    B &\leq \frac{1}{\left(1-\frac{1}{\sqrt{\ell}}\right)^{\sqrt{\ell}}}\sum_{k=0}^{\sqrt{\ell}-1} \left(\frac{np}{\ell}\right)^k \leq \frac{1}{\left(1-\frac{np}{\ell}\right)\left(1-\frac{1}{\sqrt{\ell}}\right)^{\sqrt{\ell}}} = O(1),
\end{align*}
when $\ell \geq (np)^4$. Combining both the bounds along with Stirling's approximation, we conclude that there is some $C > 0$ such that
$$
\prob(X_\ell \in T^* | V_r) 
\leq \frac{r}{n^\ell}\left(\frac{e^{np}(np)^{\sqrt{\ell}}}{\sqrt{\ell}!} + O(1)\right) \leq \frac{r}{n^\ell}\left(\frac{e^{np}(np)^{(np)^2}e^{(np)^2}}{(np)^{2(np)^2}}+O(1)\right) \leq \frac{Cr}{n^\ell},$$
when $\ell \geq (np)^4$ and $np \to \infty$. Proceeding exactly as we did for the first inequality,
\begin{align*}
\prob(v_1,\ldots,v_q \in T^*) &= \prob(v_1 \in T^*) \cdot \prob(v_2 \in T^* | V_1) \cdots \prob(v_q \in T^* | V_{q-1}) \\
&\leq \frac{p^\ell}{\ell!}\frac{(q-1)!C^{q-1}}{n^{\ell (q-1)}} \leq \frac{(np)^{(np)^4}}{(np)^4!}\frac{(q-1)!C^{q-1}}{n^{\ell q}} = O\left( \frac{(q-1)!C^{q-1}}{n^{\ell q}}\right),
\end{align*} 
where in the final bound we use the fact that $\frac{x^{x^4}}{(x^4)!} = o(1)$ as $x\to \infty$, which is an immediate consequence of Stirling's approximation.
\end{proof}

We may use Lemma \ref{treesurvival} to bound the moments of the number of vertices reachable in a particular generation. For $\ell \geq 0$, denote by $Z_\ell$ the number of vertices in $T$ reachable from the root in the $\ell$-th generation. 

\begin{corollary}\label{reversejensen}
For all integers $\ell \ge 0$ and $q \ge 1$,
$\ex[Z_\ell^q] \leq (q -1)!e^{npq}$. Furthermore, when $\ell \geq (np)^4$ and $np \to \infty$, there is a constant $C > 0$ such that $\ex[Z_\ell^q] = O((q-1)!C^{q-1})$.
\end{corollary}

\begin{proof}
Denoting by $S_\ell$ the set of $n^{\ell}$ vertices in the $\ell$-th generation of $T$, we may write
$Z_\ell = \sum_{v\in S_{\ell}} \mathbf{1}_{\{v\in T^*\}}$. Then
\[
  \ex [Z_\ell^q] = n^{\ell q}\prob( v_1,\ldots,v_q \in T^*)~,
\]
where $v_1,\ldots,v_q$ are independent vertices chosen uniformly at random from $S_\ell$.
Combining this with Lemma \ref{treesurvival} implies the stated bounds.
\end{proof}

The next bound will control the number of vertices that a typical vertex in a simple random temporal graph can reach, further allowing us to control the size of temporal cliques.

\begin{theorem}\label{sizebound} Let $T^*$ be the set of vertices in $T$ that are reachable from the root and suppose that $np \to \infty$. Then, for any integer $q \geq 1$, there is a constant $c(q)$ such that
$$\ex[|T^*|^q] \leq c(q)(np)^{4q}e^{npq}.$$
\end{theorem}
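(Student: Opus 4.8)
The plan is to write $|T^*| = \sum_{\ell=0}^\infty Z_\ell$ and to bound the $L^q$ norm of this nonnegative sum by Minkowski's inequality, since the $q$-th power of the resulting estimate is exactly the claimed bound. That is,
\[
\left(\ex[|T^*|^q]\right)^{1/q} \;\le\; \sum_{\ell=0}^\infty \left(\ex[Z_\ell^q]\right)^{1/q},
\]
and it suffices to show the right-hand side is at most $C'(q)(np)^2 e^{np}$ for a constant depending only on $q$; raising to the $q$-th power then gives $\ex[|T^*|^q] \le C(q)(np)^{2q}e^{npq}$ with $C(q)=(C'(q))^q$.

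Next I would split the series at $L=\lceil (np)^2\rceil$. For the head $0\le \ell<L$, Corollary \ref{reversejensen} supplies the uniform bound $\ex[Z_\ell^q]\le (q-1)!\,e^{npq}$, so $(\ex[Z_\ell^q])^{1/q}\le ((q-1)!)^{1/q}e^{np}$, and the $L$ terms contribute at most $((np)^2+1)((q-1)!)^{1/q}e^{np}$, which is already of the desired order $(np)^2e^{np}$.

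The tail $\ell\ge L$ is the crux, and the uniform corollary bound is useless there because it is not summable over infinitely many generations. Instead I would use a sharper per-generation estimate: since averaging the indicator over all $n^{\ell q}$ ordered $q$-tuples gives $\ex[Z_\ell^q]=n^{\ell q}\,\prob(v_1,\dots,v_q\in T^*)$ for uniform $v_1,\dots,v_q$ in generation $\ell$, the intermediate inequality inside the proof of Lemma \ref{treesurvival} yields
\[
\ex[Z_\ell^q]\;\le\;\frac{(np)^\ell}{\ell!}\,(q-1)!\,e^{np(q-1)}.
\]
For $\ell\ge (np)^2$ the factors $(np)^\ell/\ell!$ decay with consecutive ratio $np/(\ell+1)\le 1/(np)$, so after taking $q$-th roots the tail is dominated by a geometric series of ratio $(np)^{-1/q}\to 0$; its sum is at most $2\big((np)^L/L!\big)^{1/q}((q-1)!)^{1/q}e^{np(q-1)/q}$, and Stirling (exactly as in the proof of Lemma \ref{treesurvival}) gives $(np)^L/L!\le (e/np)^{(np)^2}$. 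Since $np\to\infty$, the factor $(e/np)^{(np)^2/q}$ overwhelms $e^{np(q-1)/q}$, so the entire tail contribution is $o(1)$, in particular at most $1$ for all large $n$.

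Combining the head and tail gives $(\ex[|T^*|^q])^{1/q}\le C'(q)(np)^2e^{np}$ for all large $n$, and hence the theorem after raising to the $q$-th power. I expect the tail estimate to be the main obstacle: one must resist the non-summable uniform bound of Corollary \ref{reversejensen} and instead exploit the super-exponential decay of $(np)^\ell/\ell!$ beyond generation $(np)^2$. This decay also shows, for every fixed $n$, that $\sum_\ell(\ex[Z_\ell^q])^{1/q}<\infty$, so $\ex[|T^*|^q]$ is finite for all $n$ and the finitely many $n$ below the threshold above are absorbed into $C(q)$.
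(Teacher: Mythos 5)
Your proof is correct, and while it shares the paper's basic skeleton --- split the generations at $\ell \approx (np)^2$, use the uniform moment bound of Corollary \ref{reversejensen} for the head, and exploit the super-exponential decay of $(np)^\ell/\ell!$ for the tail --- the way you aggregate across generations is genuinely different and, in my view, cleaner. The paper bounds $\ex[(\sum_{i\le (np)^2} Z_i + \sum_{i>(np)^2} Z_i)^q]$ via a $2^{q-1}$ convexity split, handles the head with a power-mean inequality, and then attacks the tail through the layer-cake identity $\ex[X^q]=\int qt^{q-1}\prob(X>t)\,dt$, a two-case union bound (either the process survives to level $(np)^2+\log t$, or some intermediate level holds at least $t/\log t$ vertices), and Markov's inequality with the $o(1)$ part of Corollary \ref{reversejensen}. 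You instead apply Minkowski's inequality to $\||T^*|\|_q \le \sum_\ell \|Z_\ell\|_q$ and control the tail with the sharper per-generation bound $\ex[Z_\ell^q] \le \frac{(np)^\ell}{\ell!}(q-1)!\,e^{np(q-1)}$, which is exactly the intermediate display in the proof of Lemma \ref{treesurvival} multiplied by $n^{\ell q}$ (a legitimate extraction, since $\ex[Z_\ell^q]=n^{\ell q}\prob(v_1,\dots,v_q\in T^*)$ for i.i.d.\ uniform $v_i$); the consecutive ratio $(np/(\ell+1))^{1/q}\le (np)^{-1/q}$ then makes the tail a convergent geometric series whose sum is $o(1)$. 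This buys you a shorter argument that avoids the delicate $t/\log t$ truncation and the summability checks for the terms $III$ and $IV$, at the mild cost of invoking the inside of Lemma \ref{treesurvival}'s proof rather than only its stated conclusion. The one point to state carefully in a final write-up is the asymptotic nature of the conclusion: your tail estimate is $o(1)$ only for $n$ large, so (like the paper) the claimed inequality should be read as holding for all sufficiently large $n$, with the finitely many remaining $n$ absorbed into the constant via the finiteness of $\sum_\ell \|Z_\ell\|_q$ for each fixed $n$.
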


\begin{proof}
Observe that
\begin{align*}
    \ex|T^*|^q & = \ex\left(\sum_{i=0}^{(np)^4}Z_i + \sum_{i > (np)^4} Z_i\right)^q  \\
& \le 2^{q-1} \ex\left(\sum_{i=0}^{(np)^4}Z_i \right)^q + 2^{q-1} \ex\left(\sum_{i > (np)^4} Z_i\right)^q
\quad \text{(by Jensen's inequality)} \\
&
\leq \underbrace{2^{q-1}((np)^4+1)^{q-1}\ex\left[ \sum_{i=0}^{(np)^4} Z_i^q \right]}_{:= I} + \underbrace{2^{q-1}q\sum_{t > 0}t^{q-1}\prob\left(\sum_{i > (np)^4} Z_i \geq t\right)}_{:= II}~,
\end{align*}
where in the last step we used Jensen's inequality to bound the first term and 
the identity $\ex[X^q] = \int qt^{q-1}\prob(X > t) dt$ to bound the second.
The first inequality of Corollary \ref{reversejensen} may be used to bound 
the expectation in term $I$,
 as 
$$\ex\left[ \sum_{i=0}^{(np)^4} Z_i^q\right] \leq 
((np)^{4}+1)
\left(\sup_{i \geq 0}\ex[Z_i^q]\right) \leq
((np)^{4}+1)
(q - 1)!e^{npq}.$$
To bound $II$, we may write
\begin{align*}
t^{q-1}\prob\left( \sum_{i > (np)^4} Z_i \geq t\right) &\leq t^{q-1}\prob\left( Z_{(np)^4 + \log t} > 0\right) + t^{q-1}\prob\left(\bigcup_{i=(np)^4 + 1}^{(np)^4 + \log t} \left\{Z_i \geq \frac{t}{\log t}\right\}\right) \\
&\leq \underbrace{t^{q-1}\prob\left( Z_{(np)^4 + \log t} > 0\right)}_{:=III} + \underbrace{t^{q-1}(\log t)\max_{(np)^4 < i \leq (np)^4 + \log t} \prob\left(Z_i \geq \frac{t}{\log t}\right)}_{:= IV},
\end{align*}
and we can bound the two terms separately. To bound $III$, note that at level $(np)^4+\log t$, there are $n^{(np)^4 + \log t}$ vertices, and they are each reachable with probability $p^{(np)^4 + \log t}/((np)^4+ \log t)!$. Thus, by Stirling's approximation,
\begin{align*}
III &\leq \frac{t^{q-1}(np)^{(np)^4 + \log t}}{((np)^4 + \log t)!} \\   
    &\leq \frac{t^{q-1}(enp)^{(np)^4 + \log t}}{(np)^{4(np)^4 + 4\log t}} \\
    &  = t^{q-1}\left( \frac{e}{(np)^3} \right)^{(np)^4 + \log t} \\
    &= t^{q-3\log(np)}\left(\frac{e}{(np)^3}\right)^{(np)^4}
\end{align*}
for any $t \geq 0$. In particular, this implies that $III$ is summable and converges to 0 when $np \to \infty$. Applying Markov's inequality and the second inequality in Corollary \ref{reversejensen} gives
$$IV \leq O\left( \frac{\log^{k+1} (t) ( k -1 )!
C^{k-1}
}{t^{k-q+1}}\right),$$
for any positive integer $k$ and $t \geq 0$. Choosing $k = q+1$ results in $IV$ being summable and bounded above by a constant depending only on $q$. Grouping up all that only depends on $q$ and upper bounding by some dominating constant $c(q)$ we get
$$\ex|T^*|^q \leq I + III + IV \leq c(q)(np)^{4q}e^{npq}.$$
\end{proof}

\section{Proof of Theorem \ref{main}}

We are now prepared to prove Theorem \ref{main}.
For labelled vertices $\{1,\ldots,m\}$ to form a temporal clique in an RSTG they need to all be reachable from one another. This can happen if and only if for all distinct $u,v \in \{1,\ldots,m\}$, there is a vertex $w$ that can reach $v$ with only edges that have time stamps above $p/2$, and is reachable from $u$ with only edges that have time stamps below $p/2$. With this in mind, for any $0\le a < b\le p$, we define $G_{[a,b]}$ to be the subgraph obtained from $G$ by only keeping edges with time stamps in $[a,b]$. Set $A_1,\ldots,A_m$ to be the collection of all vertices that are reachable from $1,\ldots,m$ in $G_{[0,p/2]}$ and $B_1,\ldots,B_m$ to be the collection of all vertices in $G_{[p/2,p]}$ that can reach $1,\ldots,m$. With this new notation, we can say that $\{1,\ldots,m\}$ form a temporal clique if for all $i,j \in \{1,\ldots,m\}$ distinct, the set $A_i \cap B_j$ is nonempty. 

It is important to note that $G_{[p/2,p]}$ and $G_{[0,p/2]}$ are identically distributed RSTGs, but are not independent. Observe that for any $0\le a < b \le p$, the RSTG $G_{[a,b]}$ is determined by the binary vector $X = (X_1,\ldots,X_{\binom{n}{2}})$
defined by $X_i = \1_{e_i \in G_{[a,b]}}$ (for some enumeration of the edges of $K_n$) and a random permutation $O_{[a,b]}$ of
$[\binom{n}{2}]$ that denotes the relative orderings of the edge labels. In the next lemma we consider certain functionals
of $G_{[a,b]}$, represented by $X$ and $O_{[a,b]}$. More precisely, such a functional is of the form
$f:\{0,1\}^{\binom{n}{2}}\times \mathrm{Sym}(\binom{n}{2}) \to \R$, where $\mathrm{Sym}(\binom{n}{2})$ is the set of permutations
of $[\binom{n}{2}]$.
The next lemma deals with the dependence between two subgraphs $G_{[a,b]}$ and $G_{[c,d]}$, for some $0 \leq a < b < c < d \leq p$.

\setcounter{theorem}{5}

\begin{lemma}\label{dependence}
Let $G$ be an RSTG with vertices labelled $\{1,\ldots,n\}$, and let $0 \leq a < b < c < d \leq p$. Set $X_i = \1_{e_i \in G_{[a,b]}}$, $Y_i = \1_{e_i \in G_{[c,d]}}$ for some enumeration of the edges of $K_n$, $e_1,\ldots,e_{\binom{n}{2}}$, and let $X = (X_1,\ldots,X_{\binom{n}{2}})$ and $Y = (Y_1,\ldots,Y_{\binom{n}{2}})$. Let $O_{[a,b]}$ and $O_{[c,d]}$ be the permutations that denote the relative orderings of the edges in the two graphs. 
Let
$f,g:\{0,1\}^{\binom{n}{2}}\times \mathrm{Sym}(\binom{n}{2}) \to \R$ be such that
 $f(x_1,\ldots,x_{\binom{n}{2}},s)$ and $g(x_1,\ldots,x_{\binom{n}{2}},s)$ are two non-decreasing functions in $x_1,\ldots,x_{\binom{n}{2}}$ for any fixed $s\in  \mathrm{Sym}(\binom{n}{2})$. 
Then
$$\ex\left[f\left(X,O_{[a,b]}\right)g\left(Y,O_{[c,d]}\right)\right] \leq \ex\left[f\left(X,O_{[a,b]}\right)\right] \ex\left[g\left(Y,O_{[c,d]}\right)\right].$$
In particular,
$$\ex\left[ |A_1|^q \cdots |A_m|^q \cdot |B_1|^q \cdots |B_m|^q \right] \leq \ex\left[|A_1|^q \cdots |A_m|^q \right]\ex\left[
|B_1|^q \cdots |B_m|^q\right] = \ex\left[|A_1|^q \cdots |A_m|^q\right]^2,$$
for all $q \geq 0$ and $A_1,B_1,\ldots,A_m,B_m$ defined as above.
\end{lemma}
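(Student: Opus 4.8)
The plan is to reduce the claimed inequality to a \emph{negative association} statement for the edge-presence indicators, after first disposing of the orderings $O_{[a,b]}$ and $O_{[c,d]}$ by conditioning. Write $U_{e_i}$ for the time stamp of $e_i$, so that $X_i = \1_{U_{e_i}\in[a,b]}$ and $Y_i = \1_{U_{e_i}\in[c,d]}$. First I would condition on the full indicator vectors $(X,Y)$. Since $[a,b]$ and $[c,d]$ are disjoint and the $U_{e_i}$ are independent, conditionally on $(X,Y)$ the time stamps of the edges landing in $[a,b]$ are i.i.d.\ uniform on $[a,b]$ and those landing in $[c,d]$ are i.i.d.\ uniform on $[c,d]$, with the two families independent. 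Hence $O_{[a,b]}$ and $O_{[c,d]}$ are, given $(X,Y)$, independent uniform permutations, the first not depending on $Y$ and the second not on $X$. Because $f$ sees only $(X,O_{[a,b]})$ and $g$ only $(Y,O_{[c,d]})$, this gives
$$\ex\!\left[f\,g \mid X,Y\right] = \tilde f(X)\,\tilde g(Y), \qquad \tilde f(X) := \ex\!\left[f(X,O_{[a,b]})\mid X\right],\ \ \tilde g(Y) := \ex\!\left[g(Y,O_{[c,d]})\mid Y\right],$$
so $\ex[fg]=\ex[\tilde f(X)\tilde g(Y)]$ while $\ex[f]=\ex[\tilde f(X)]$ and $\ex[g]=\ex[\tilde g(Y)]$.

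The next step is to verify that $\tilde f$ is non-decreasing in $X$ and $\tilde g$ in $Y$. The subtlety is that raising a coordinate of $X$ changes which edges $O_{[a,b]}$ permutes, so one cannot naively compare the two permutation laws. I would sidestep this by realizing both orderings through a single uniform global ordering $\pi$ of all $\binom{n}{2}$ edges, of which $O_{[a,b]}$ and $O_{[c,d]}$ are the induced restrictions; conditionally on $(X,Y)$ these restrictions remain independent and uniform. For a \emph{fixed} $\pi$, turning an absent edge into a present one only creates new increasing paths and never destroys existing ones, so $f(\cdot,\pi)$ is non-decreasing in $X$, and averaging over the common $\pi$ preserves monotonicity of $\tilde f$, and symmetrically of $\tilde g$.

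It then remains to prove $\ex[\tilde f(X)\tilde g(Y)]\le\ex[\tilde f(X)]\ex[\tilde g(Y)]$ for $\tilde f$ increasing in $X$ and $\tilde g$ increasing in $Y$, which I would obtain from negative association of the collection $\{X_i,Y_i\}_i$. For each fixed $i$ the events $\{U_{e_i}\in[a,b]\}$ and $\{U_{e_i}\in[c,d]\}$ are mutually exclusive, so $X_iY_i=0$ and $\ex[X_iY_i]=0\le\ex[X_i]\ex[Y_i]$; as $X_i,Y_i$ are binary this shows the pair $(X_i,Y_i)$ is negatively associated. Since the pairs are independent across $i$, the union $\{X_i,Y_i\}_i$ is negatively associated, and applying the defining inequality to the disjoint index sets carrying $X$ and $Y$ with the increasing functions $\tilde f,\tilde g$ yields the bound; combined with the first step this proves the main inequality. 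The real difficulty is the bookkeeping of the first two steps, namely decoupling the two orderings while keeping monotonicity intact; once that is done the negative-association inequality is routine. Finally, the ``in particular'' statement follows by taking $f=|A_1|^q\cdots|A_m|^q$ and $g=|B_1|^q\cdots|B_m|^q$: both are monotone in their respective presence vectors because reachability is monotone under adding edges, and because $G_{[0,p/2]}$ and $G_{[p/2,p]}$ are identically distributed (the $B_j$ arising from $G_{[p/2,p]}$ by a time-reversal that preserves the RSTG law) one has $\ex[f]=\ex[g]$, whence $\ex[fg]\le\ex[f]\ex[g]=\ex[f]^2$.
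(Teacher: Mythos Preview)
Your proposal is correct and follows essentially the same route as the paper: condition on the indicator vectors $(X,Y)$ to make the two orderings conditionally independent, reduce to $\ex[\tilde f(X)\tilde g(Y)]$ with monotone $\tilde f,\tilde g$, and then invoke negative association of the family $\{X_i,Y_i\}_i$; the ``in particular'' clause is handled identically. The only differences are expository: you spell out the NA argument (mutually exclusive binary pairs, then closure under independent products) where the paper cites \cite{Dubhashi1998}, and you explicitly flag and resolve the subtlety that the support of $O_{[a,b]}$ changes with $X$ by passing to a global permutation---a point the paper's proof elides.
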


\begin{proof}
Conditioned on $Z= (X_1,Y_1,\ldots,X_{\binom{n}{2}},Y_{\binom{n}{2}})$, all of the randomness of $f\left(X,O_{[a,b]}\right)$ and $g\left(Y,O_{[c,d]}\right)$ comes from the random relative orderings. Since $a < b < c < d$, the two random variables $O_{[a,b]}$ and $O_{[c,d]}$ are independent, which implies that $f\left(X,O_{[a,b]}\right)$ and $g\left(Y,O_{[c,d]}\right)$ must also be conditionally independent on $Z$. Hence, by the tower property of conditional expectation,
\begin{align*}
\ex\left[f\left(X,O_{[a,b]}\right)g\left(Y,O_{[c,d]}\right)\right] &= \ex\left[\ex\left[f\left(X,O_{[a,b]}\right)g\left(Y,O_{[c,d]}\right) \Big| Z \right]\right] \\
&= \ex\left[\ex\left[f\left(X,O_{[a,b]}\right) \Big| Z \right]\ex\left[g\left(Y,O_{[c,d]}\right) \Big| Z \right]\right]. \\
&= \ex\left[\ex\left[f\left(X,O_{[a,b]}\right) \Big| X \right]\ex\left[g\left(Y,O_{[c,d]}\right) \Big| Y\right]\right],
\end{align*}
where the final equality just follows from the fact that, once we condition on $X$, knowing $Y$ tells us nothing about $f(X,O_{[a,b]})$ and vice versa. The random variables
$$\ex\left[f\left(X,O_{[a,b]}\right) \Big| X = (z_1,\ldots,z_{\binom{n}{2}}) \right], \ \text{and} \ \ex\left[g\left(Y,O_{[c,d]}\right) \Big| Y = (z_1,\ldots,z_{\binom{n}{2}}) \right]$$
are non-decreasing functions in $z_1,\ldots,z_{\binom{n}{2}}$ by the definitions of $f$ and $g$. Furthermore, the collection of random variables $\{X_i : i \in \{1,\ldots,\binom{n}{2}\}\} \cup \{Y_i : i \in \{1,\ldots,\binom{n}{2}\}\}$ are negatively associated (this can be seen by combining Proposition 7 and Lemma 8 from Dubhashi and Ranjan \cite{Dubhashi1998}). With this, applying the tower property again gives
\begin{align*}
\ex\left[f\left(X,O_{[a,b]}\right)g\left(Y,O_{[c,d]}\right)\right]  \leq \ex\left[f\left(X,O_{[a,b]}\right)\right]\ex\left[g\left(Y,O_{[c,d]}\right)\right].   
\end{align*}
Observing that $|A_1| \cdots |A_m|$ and $|B_1| \cdots |B_m|$ satisfy the conditions of the first statement and are identically distributed is enough to complete the proof of the second inequality.
\end{proof}

The next lemma acts as a bridge between RSTGs and the temporal branching processes explored in the previous section. The idea behind the proof hinges on the fact that the sizes of $\bin(n,p)$ branching processes upper bounds the sizes of neighbourhoods around vertices in an Erd\H{o}s-R\'{e}nyi graph, though formalizing this idea takes some work. Equipped with this and Theorem \ref{sizebound}, the proof of Theorem \ref{main} is reduced to a routine use of the first-moment method.

\begin{lemma}\label{stoch}
$|A_1|$ is stochastically dominated by $|T^*|$, where $T^*$ is the set of vertices reachable from the root in a temporal branching process $T$ with offspring distribution $\bin(n,p/2)$. In particular, $\ex[|A_1|^q] \leq \ex[|T^*|^q]$ for all $q \geq 0$.
\end{lemma}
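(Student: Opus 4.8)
The plan is to construct an explicit coupling between the breadth-first exploration of $A_1$ (the set of vertices reachable from vertex $1$ in $G_{[0,p/2]}$ via increasing paths) and the temporal branching process $T$ with $\bin(n,p/2)$ offspring. The core difficulty is that $A_1$ lives in a finite graph where vertices can be revisited and edges are shared between competing paths, whereas $T^*$ lives in an infinite $n$-ary tree where every explored vertex is fresh. So the coupling must dominate the graph exploration by the tree exploration rather than equate them.

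First I would set up the breadth-first exploration of $A_1$ as a temporal process. Start at vertex $1$ with the ``current time'' at $0$. At each step, take a reached vertex $w$ together with the time stamp $t_w$ of the edge on which $w$ was reached, and examine its neighbours in $G_{[0,p/2]}$ whose connecting edge has time stamp larger than $t_w$; those neighbours become newly reached, inheriting the time stamp of their connecting edge. This exactly encodes reachability by increasing paths. Each such ``exploration of $w$'' reveals at most a $\bin(n-1, p/2)$-distributed number of forward edges—in fact fewer, since already-examined edges and the requirement that time stamps exceed $t_w$ only reduce the count—and the relevant time stamps are conditionally i.i.d.\ uniform on $(t_w, p/2)$, matching the mechanism that generates $T$.

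The key step is the inductive coupling. I would couple each graph-vertex $w$ with a tree-vertex $\hat w$ so that the set of forward-edge time stamps revealed at $w$ is a subset of the set of surviving increasing labels at $\hat w$. Because the tree is $n$-ary and infinite, at $\hat w$ we always have $n$ fresh children available with i.i.d.\ uniform labels, whereas in the graph some potential neighbours are already reached (cutting the branch short) and the offspring count is at most $n-1$. Feeding the graph's revealed time stamps into the tree's label sequence and padding the remainder with independent uniforms yields, at every step, a reached set in the graph that injects into $T^*$. Since each explored graph vertex contributes at least one tree vertex under this injection, $|A_1| \le |T^*|$ pointwise on the coupling, giving the stochastic domination. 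The moment inequality $\ex[|A_1|^q] \le \ex[|T^*|^q]$ then follows immediately because $x \mapsto x^q$ is non-decreasing on $[0,\infty)$ for $q \ge 0$.

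The main obstacle I anticipate is handling the time-stamp constraint cleanly inside the coupling: unlike a static branching-process domination, here the branching rate at each vertex depends on the time stamp of its incoming edge (a vertex reached late has a shorter window $(t_w, p/2)$ in which forward edges can lie). I would address this by tracking the incoming time stamp as part of each vertex's state and verifying that, conditionally on it, the graph offspring is stochastically dominated by the tree offspring with the matching truncated-uniform labels—which is precisely the distribution built into $T$. Making the injection \emph{monotone} in these labels, so that the same realization of uniform variables drives both processes, is what makes the pointwise bound $|A_1| \le |T^*|$ hold on the coupled space and not merely in distribution.
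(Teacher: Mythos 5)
Your high-level strategy (couple an exploration of $A_1$ with the tree exploration so that the reached set injects into $T^*$) is the right kind of argument, and it is close in spirit to the paper's proof, which also builds an explicit inductive coupling. But there is a genuine gap in the first step: the breadth-first exploration you describe does not compute $A_1$. In your process each vertex is reached once, inherits the time stamp of the edge on which it was \emph{first} reached (in hop order), and already-reached neighbours are skipped. For increasing-path reachability this undercounts: what matters for propagating beyond a vertex $v$ is the \emph{minimal} final time stamp over all increasing paths from $1$ to $v$ (its foremost arrival time), and the first path to reach $v$ in breadth-first order need not realize that minimum. Concretely, take vertices $1,a,b,c$ with time stamps $\{1,a\}=0.5$, $\{1,b\}=0.1$, $\{b,a\}=0.2$, $\{a,c\}=0.3$, all below $p/2$. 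Then $c\in A_1$ via $1\to b\to a\to c$, but your BFS reaches $a$ first from $1$ with stamp $0.5$, skips the re-arrival at $a$ via $b$ with stamp $0.2$, and therefore never finds $c$. So the quantity you dominate by $|T^*|$ is the size of a possibly strict subset of $A_1$, and the lemma does not follow. The obvious repairs each break something: if you allow re-reaching a vertex whenever a smaller arrival time is found, edges get examined more than once and the conditional i.i.d.-uniform, fresh-binomial structure you rely on is destroyed; if you keep one-shot reveals, you must process vertices in increasing order of arrival time rather than hop order.

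The paper takes exactly that second route: it runs the foremost tree algorithm, which adds vertices one at a time in increasing order of the time stamps $\tau_1<\tau_2<\cdots$ of the connecting edges (a temporal Dijkstra), so each vertex enters with its foremost arrival time and $|A_1|=\inf\{k : \tau_k>p/2\}$. The coupling is then not per-vertex offspring but global: one shows inductively that the tree process always has at least as many viable edges ($|E_k|\le|E_k^*|$) and hence, as a minimum over more uniforms, an earlier next time stamp ($\tau_k^*\le\tau_k$), so the tree process takes longer to exceed $p/2$ and $|A_1|\le|T^*|$ pointwise on the coupled space. If you reorganize your exploration around arrival-time order and couple the sequences $(\tau_k,|E_k|)$ rather than per-vertex offspring, your argument becomes essentially the paper's.
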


\begin{proof}
We can determine $A_1$ via the foremost tree algorithm from
Casteigts, Raskin, Renken, and Zamaraev \cite{Casteigts2023}.
  The algorithm builds a tree recursively, building a tree of
  increasing
  paths starting from an arbitrary
  vertex. The algorithm is defined  as follows:

\begin{itemize}
\item
    Initialize with $\tau_0 = 1$ and $G_0$ as the single vertex
    labelled $1$.
 \item
  While $\tau_k \leq p/2$, set $\tau_{k+1}$ to be the smallest time
stamp of an edge   connecting vertices in $G_k$ with vertices outside of $G_k$
that is
larger than $\tau_k$.
\item
  If $\tau_{k+1} \leq p/2$, add the corresponding edge $e_{k+1}$ and
vertex $v_{k+1}$ to obtain $G_{k+1}$.
\item
    If $\tau_k>p/2$, the algorithm terminates and outputs
$G_{k}$.
\end{itemize}

Note that $|A_1|$
  equals the number of vertices of the resulting tree $G_k$,
that is,
\begin{equation}\label{time}
|A_1| = \inf\left\{k \geq 0 : \tau_k > \frac{p}{2}\right\}~.
\end{equation}
This foremost tree algorithm can also be run on the tree $T$ as a way
to generate $T^*$ with the same procedure, and we denote the sequence
of timestamps in this graph as $\tau^*_k$. Additionally, we denote by
$E_k$ and $E^*_k$ the collection of all viable edges that could be
added during step $k$, that is, all edges from $G_k$ to $K_n$ that, if added, keep the graph $G_{k+1}$ as an increasing tree (all vertices reachable from the root). Note that by the definition of the algorithm, every edge in $E_k$ must have a time stamp that is at least $\tau_k$ and similarly for $E_k^*$ and $\tau^*_k$. Moreover, the time stamps of edges in $E_k$ and $E^*_k$ are uniformly distributed on $[\tau_{k-1},1]$ and $[\tau_{k-1}^*,1]$ respectively. By means of a direct inductive coupling we show that $\tau$ stochastically dominates $\tau^*$, $|E_k^*|$ stochastically dominates $|E_k|$, and hence $|T^*|$ must stochastically dominate $|A_1|$ by the characterization of (\ref{time}).

The base case of the induction is easy to see. By definition $|E_1| = n-1$, $|E_1^*| = n$,
  $\tau_1 \sim \min_{1 \leq i \leq n-1} U_{1,i}$, and $\tau_1^* \sim \min_{1 \leq n} U_{1,i}$. Thus, just using the same uniforms to generate both $\tau_1$ and $\tau_1^*$ is enough. Now suppose that there is some probability space $(\Omega_{k-1},\mathcal{F}_{k-1},\prob_{k-1})$ and random variables distributed as $|E_{k-1}|,|E_{k-1}^*|,\tau_{k-1},\tau^*_{k-1}$ (we just use the same symbols to denote these random variables) such that $|E_{k-1}|(\omega) \leq |E_{k-1}^*|(\omega)$ and $\tau^*_{k-1}(\omega) \leq \tau_{k-1}(\omega)$ for all $\omega \in \Omega_{k-1}$. In the $(k-1)$-th step of the algorithm we added a new vertex to both graphs, resulting in $n-k$ possible new edges to $G$ and $n$ edges to $T$ for the $k$-th step. Hence, since we cannot add edges that are below $\tau_{k-1}$ and $\tau_{k-1}^*$ respectively
$$|E_k| \sim |E_{k-1}| + \bin(n-k,1-\tau_{k-1}), \quad |E_k^*| \sim |E_{k-1}^*| + \bin(n,1-\tau_k^*),$$
and, recalling the distribution of time stamps in $E_k$ and $E_k^*$,
$$\tau_k \sim \tau_{k-1} + (1-\tau_{k-1})\min_{1 \leq i \leq |E_k|} U_{k,i}, \quad \tau_k^* \sim \tau^*_{k-1} + (1-\tau^*_{k-1})\min_{1 \leq i \leq |E_k^*|} U_{k,i}.$$
Let $(\Omega_k, \mathcal{F}_k,\prob_k)$ be the product of $(\Omega_{k-1},\mathcal{F}_{k-1},\prob_{k-1})$ with $(\Omega', \mathcal{F}',\prob')$, the probability space of $\binom{n}{2}+n$ independent uniform random variables, $(U_{k,i} : 1 \leq i \leq \binom{n}{2}+n)$. Here we can couple the binomial random variables by generating them as $\sum_{i=1}^k \1_{\{U_{k,i} \leq (1-\tau_{k-1})\}}$ and $\sum_{i=1}^k \1_{\{U_{k,i} \leq (1-\tau^*_{k-1}\}}$ respectively. Then, if we generate $|E_k|$ and $|E_k^*|$ with these binomials, by the inductive hypothesis, it must be the case that $|E_k|(\omega) \leq |E_k^*|(\omega)$ for all $\omega \in \Omega_k$. Similarly, using the uniforms $(U_{k,i} : n+1 \leq i \leq |E_k^*|)$ to generate both $\tau_k$ and $\tau_k^*$ according to their distributions results in also having $\tau_k^*(\omega) \leq \tau_k(\omega)$ for all $\omega \in \Omega_k$.
\end{proof}

With the lemmas out of the way we now prove our main result.

\begin{proof}[Proof of Theorem \ref{main}]
Let $m \geq 0$ and let $A_1,\ldots,A_m,B_1,\ldots,B_m$ be as defined in the beginning of this section. Let $N$ be the number of temporal cliques of size $m$ in $G$. Then, if we take $(v_{ij})_{i,j = 1}^m$ to be independently and uniformly chosen random vertices from the labelled set $\{1,\ldots,n\}$, we may apply Lemma $\ref{dependence}$ to get that
\begin{align*}
\ex[N] &\leq n^m\prob(\{1,\ldots,m\} \text{ form a temporal clique}) \\
&= n^{m+m(m-1)}\prob(v_{ij} \in A_i, v_{ij} \in B_j \ \forall i \ne j) \\
&= n^{m+m(m-1)}\ex\left[ \prob\Big(v_{ij} \in A_i, v_{ij} \in B_j \ \forall i \ne j \Big| |A_1|,\ldots,|A_m|,|B_1|,\ldots,|B_m| \Big)  \right] \\
&\leq n^{m+m(m-1)}\ex\left[ \left( \frac{|A_1|}{n} \right)^{m-1} \cdots \left( \frac{|A_m|}{n} \right)^{m-1} \left( \frac{|B_1|}{n} \right)^{m-1} \cdots \left( \frac{|B_m|}{n} \right)^{m-1} \right] \\
&\leq n^{m+m(m-1)}\ex\left[ \left( \frac{|A_1|}{n} \right)^{m-1} \cdots \left( \frac{|A_m|}{n} \right)^{m-1}\right]^2 \\
&\leq n^{m-m(m-1)}\ex[|A_1|^{m-1} \cdots |A_m|^{m-1}]^2.
\end{align*}
Applying H\"{o}lder's inequality along with Lemma \ref{stoch} and
Theorem \ref{sizebound}
  applied for the probability $p/2 = c log(n)/(2n)$,
gives us the upper bound 
\begin{align*}
  \ex[N]
  &
    \leq n^{m-m(m-1)}\ex[|A_1|^{m(m-1)}]^2 \leq
         \kappa_m \left(\frac{c}{2}\log(n)\right)^{8m(m-1)}n^{m-m(m-1)+c(m(m-1))}
\end{align*}
  where $\kappa_m$ is a constant depending on $m$ only.
If $m-m(m-1)+cm(m-1) < 0$, then $\ex[N] \to 0$ as $n \to \infty$. Rearranging, this inequality is equivalent to $m \geq \lceil \frac{1}{1-c}+1\rceil$ as $m$ is an integer.
\end{proof}

\bibliographystyle{plain}
\bibliography{ref}

\begin{thebibliography}{10}

\bibitem{Angel2020}
Omer Angel, Asaf Ferber, Benny Sudakov, and Vincent Tassion.
\newblock Long monotone trails in random edge-labellings of random graphs.
\newblock {\em Combin. Probab. Comput.}, 29(1):22--30, 2020.

\bibitem{Becker2023}
Ruben Becker, Arnaud Casteigts, Pierluigi Crescenzi, Bojana Kodric, Malte Renken, Michael Raskin, and Viktor Zamaraev.
\newblock Giant components in random temporal graphs.
\newblock {\em arXiv}, 2205.14888, 2023.

\bibitem{Boyd1979}
David~W. Boyd and J.~Michael Steele.
\newblock Random exchanges of information.
\newblock {\em Journal of Applied Probability}, 16(3):657--661, 1979.

\bibitem{Broutin2023}
Nicolas Broutin, Nina Kamčev, and Gabor Lugosi.
\newblock Increasing paths in random temporal graphs.
\newblock {\em arXiv}, 2306.11401, 2023.

\bibitem{Casteigts2023}
Arnaud Casteigts, Michael Raskin, Malte Renken, and Viktor Zamaraev.
\newblock Sharp thresholds in random simple temporal graphs.
\newblock {\em arXiv}, 2011.03738, 2023.

\bibitem{Chvatal1971}
V.~Chv\'{a}tal and J.~Koml\'{o}s.
\newblock Some combinatorial theorems on monotonicity.
\newblock {\em Canad. Math. Bull.}, 14:151--157, 1971.

\bibitem{Dubhashi1998}
Devdatt Dubhashi and Desh Ranjan.
\newblock Balls and bins: a study in negative dependence.
\newblock {\em Random Structures Algorithms}, 13(2):99--124, 1998.

\bibitem{Graham1973}
R.~L. Graham and D.~J. Kleitman.
\newblock Increasing paths in edge ordered graphs.
\newblock {\em Period. Math. Hungar.}, 3:141--148, 1973.

\bibitem{Haigh1981}
John Haigh.
\newblock Random exchanges of information.
\newblock {\em Journal of Applied Probability}, 18(3):743--746, 1981.

\bibitem{Lavrov2016}
Mikhail Lavrov and Po-Shen Loh.
\newblock Increasing {H}amiltonian paths in random edge orderings.
\newblock {\em Random Structures Algorithms}, 48(3):588--611, 2016.

\bibitem{Moon1972}
J.~W. Moon.
\newblock Random exchanges of information.
\newblock {\em Nieuw Arch. Wisk. (3)}, 20:246--249, 1972.

\end{thebibliography}

\end{document}